\documentclass[12pt,a4paper,article]{article}
\usepackage[utf8]{inputenc}
\usepackage{amsmath}
\usepackage{amsfonts}
\usepackage{amssymb}
\usepackage{amsthm}
\usepackage{dsfont}
\usepackage{authblk}

\theoremstyle{plain}
\newtheorem{theorem}{Theorem}[section]
\newtheorem{lemma}[theorem]{Lemma}

\newtheorem{corollary}[theorem]{Corollary}

\theoremstyle{definition}

\theoremstyle{remark}

\title{Remarks on random walks on graphs and the Floyd boundary}
\author{
        Panagiotis Spanos\thanks{The author acknowledges the support of the Austrian Science Fund (FWF): W1230.
}}
\affil{\footnotesize Institute of Discrete Mathematics, Graz University of Technology, Austria}
\date{}

\begin{document}
\maketitle

\begin{abstract}
We show that for a uniformly irreducible random walk on a graph, with bounded range, there is a Floyd function for which the random walk converges to its corresponding Floyd boundary. Moreover if we add the assumptions, $p^{(n)}(v,w)\leq C \rho^n$, where $\rho < 1$ is the spectral radius, then for any Floyd function $f$ that satisfies $\sum_{n=1}^{\infty}nf(n)<\infty$, the Dirichlet problem with respect to the Floyd boundary is solvable.
\end{abstract}

\section{Introduction}

Different compactifications of classes of graphs,
respectively groups, play important roles in the boundary theory of random walks, typically in connection with
probabilistic and potential theoretic aspects of
boundary convergence. 
The basic compactification of a locally finite, connected graph is its end compactification, see Freudenthal \cite{Frdn}. Any transient random walk with bounded range on such a graph converges to a random end, and under suitable irreducibility assumtions, its Martin compactification, see Martin \cite{Mrtn} and Doob \cite{Doo}, covers the end compactification, see Picardello and
Woess \cite{PWo}. For a class of random walks on hyperbolic graph, respectively on hyperbolic groups (in the sense of Gromov \cite{Grmv}), Ancona \cite{Ancn} showed that the Martin boundary  can be identified with the hyperbolic boundary.
In case of invariance under a transitive group action, very general results are available for convergence to the boundary and the Dirichlet problem at infinity with respect to ends or the hyperbolic compactification. For many references, see Woess \cite{Wo2}. In absence of a group action, Kaimanovich and Woess \cite{KWo} provided geometric criteria beyond bounded range to obtain such results.

In 1980 Floyd \cite{Fld} introduced the Floyd compactification of groups, respectively their Cayley graphs, in order to study limit sets of Kleinian groups, the Floyd compactification is defined for all graphs, respectively groups. Moreover there are groups whose Floyd boundary is bigger than their space of ends. More precisely the Floyd boundary of the fundamental group of a hyperbolic $n$-manifold is the $n$-dimensional sphere.

In this context, it first was Karlsson \cite{K1}, \cite{K2}, \cite{K3} to relate the Floyd boundary with issues concerning random walks. In case of random walks on groups, or more generally, random walks on graphs which are invariant under a transitive group action,
Karlsson has clarified the typical convergence questions. 
Only recently, there has been substantial new work
in the group setting. Gekhtman , Gerasimov , Potyagailo and Yang \cite{GGPY} have shown that under natural assumptions,
the Martin boundary of a random walk on a group covers the
Floyd boundary.

On the other hand, for random walks on graphs that possess
no group invariance, \cite{K3} only contains a few
indications, just concerning simple random walk, under which
conditions one may have convergence to the Floyd boundary. 

The present short note aims at giving more general answers
to those quesitions in the non-group invariant case, along the
line of the methods of \cite{KWo}.
As a matter of fact, it needs a few modifications of those methods
to get analogous results for the Floyd boundary. 

In Section \ref{s2}, convergence to the Floyd boundary is proved for uniformly irreducible random walks with bounded
range, without the typical assumption that the spectral radius is smaller than 1. Instead, the Floyd function is assumed to
depend on the Green function of the random walk.

In Section \ref{s3}, the solution of the Dirichlet problem at infinity and convergence to the boundary are proved
under uniform assumptions concerning irreducibility, first moment and exponential decay of transition
probabilities, which relate the random walk with the underlying geometry.

\section{Convergence to the boundary}\label{s2}

We first describe some preliminaries.
Let $G$ be a locally finite graph and $d$ the graph distance metric. Fix a vertex $e$ and let $f$ be a function $f:\mathbb{N}\to \mathbb{R}_{>0}$ such that
$\lambda f(r)\leq f(r+1) \leq f(r)$ for some constant $\lambda >0$ and every $r>0$. It is required in addition for $f$ to be summable $\sum_{j=0}^{\infty} f(r) < \infty$. The function $f$ is called Floyd function.

We fix a vertex $e$ in the Cayley graph and for an edge $[v,w]$ we denote $d(e,w)=\vert w\vert$ and $\min \lbrace \vert v\vert,\vert w\vert \rbrace =\vert [v,w]\vert$. To create the Floyd boundary, one needs to resize each edge of the graph based to its distance from the fixed vertex $e$ and the Floyd function. So the new length will be $f(\vert [v,w]\vert)$.

Consequently a new length arises, let $\alpha = \lbrace x_i\rbrace_{i=1}^n$ be a path (where $x_i$ and $x_{i+1}$ are neighbours), now the Floyd length of $\alpha$ will be $L_f(\alpha)=\sum_{i=1}^{n-1}f(\vert [x_i,x_{i+1}]\vert)$. Subsequently a new metric arises $d_f(v,w)=\inf\limits_{\alpha} L_f(\alpha)$, where $\alpha$ runs over all paths connecting $v$ and $w$. With this metric $G$ has finite diameter and the completion  $\bar{G}$ of $G$ is a compact metric space and the Floyd boundary is defined as $\partial_f G = \bar{G}\setminus G$.

Let $P$ be the irreducible transition matrix of a transient random walk $Z_n$  on $G$. If there is a $\Gamma\leqslant Aut(G,P)$ that acts transitively on $G$. Karlsson \cite{K2} proved  that $\bar{G}$ satisfies Woess' axioms of projectivity and contractivity, meaning that $\bar{G}$ is a contractive  $\Gamma$-compactification  of $G$ (as defined in \cite{Wo1}). Therefore if the Floyd boundary $\partial_f G$ is infinite and does not have a fixed point by $\Gamma$, then the Dirichlet problem with respect to $P$ and $\bar{G}$ is solvable. 

In order to prove this, Karlsson first proves \cite[Lemma~1]{K2}. Since it will be needed in our proof, we briefly describe it. Let $v,w$ be two vertices  of $G$, set 
$v\wedge w=\tfrac{1}{2}\left( d(v,e)+d(w,e)-d(v,w)\right)$ the Gromov product of $v$ and $w$, let $m$ be the nearest, to $e$, point of the $d$-geodesic $[v,w]$. Then 
\begin{align}
d_f(v,w)\leq d_f(v,m)+d_f(w,m)\leq 2\Big(2\vert m \vert f(\vert m\vert )
+\sum\limits_{i=\vert m\vert}^{\vert m \vert + d(v,w)} f(i)\Big).\label{anis}
\end{align}
Define $\nu(\vert m\vert):= 4\vert m \vert f(\vert m\vert )
+2\sum_{i=\vert m\vert}^{\infty} f(i)$ and observe that $\nu(\vert m \vert) \to 0$ as $\vert m \vert \to \infty$. Moreover if $(v\wedge w)\rightarrow \infty$, then $d_f(v,w)\rightarrow 0$, since $d_f(v,w)\leq\nu(\vert m \vert )$ and
$\vert m \vert = d(e,m) \geq \tfrac{1}{2}\left(d(v,e)+d(w,e)-d(v,w)\right)$. This inequality turns out equally useful in the general case.

With the hypothesis of uniform irreducibility and bounded range, we find a Floyd function for which the random walk converges to the corresponding Floyd boundary. This function depends on the distribution of the random walk. Uniform irreducibility means that there are $\epsilon_0>0$ and $K>0$ such that for every $v,w\in G$ with $v\sim w$ there is $\kappa<K$ such that $p^{(\kappa)}(v,w)\geq \epsilon_0$ and bounded range means that there is an $M>0$ such that $\sup \lbrace d(v,w):v,w\in G, p(v,w)>0\rbrace \leq M$. The Green function $g(v,w)$, for $v,w\in G$, of a random walk $Z_n$ is the expected number of visits to $w$ when $Z_0=v$. If $g(v,w)$ is finite for every $v,w\in G$ then $Z_n$ is called transient. For any set $W\subset G$ we set $g(v,W)=\sum_{w\in W}g(v,w)$.
Let $S_n=\lbrace w\in G : \vert w\vert = n \rbrace$
and $B_n= \lbrace w\in G: \vert w\vert \leq n\rbrace$.

\begin{lemma}\label{lemma}
Let $P$ be the transition matrix of a transient, uniformly irreducible random walk $Z_n$ on $G$ with bounded range. Then $f(n)=\frac{1}{n^3 g(e,B_{n+M})}$ is well defined and satisfies the requirements of a Floyd function.
\end{lemma}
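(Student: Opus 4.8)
The three requirements to verify are positivity together with well-definedness, the comparison $\lambda f(n)\le f(n+1)\le f(n)$, and summability. Of these, positivity, well-definedness and summability are immediate: since $Z_n$ is transient each $g(e,w)$ is finite, and $B_{n+M}$ is a finite set because $G$ is locally finite, so the denominator is a finite positive number (indeed $g(e,B_{n+M})\ge g(e,e)\ge 1$, as $e\in B_{n+M}$ and $p^{(0)}(e,e)=1$); consequently $0<f(n)\le n^{-3}$ and $\sum_n f(n)\le\sum_n n^{-3}<\infty$. The bound $f(n+1)\le f(n)$ is equally easy, since $n\mapsto n^3 g(e,B_{n+M})$ is nondecreasing (both factors grow with $n$, the second because $B_{n+M}\subseteq B_{n+1+M}$). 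Thus the entire content of the lemma is the remaining inequality $\lambda f(n)\le f(n+1)$, which after clearing denominators is equivalent to a uniform geometric control $g(e,B_{n+1+M})\le C\,g(e,B_{n+M})$ for all $n\ge 1$ with $C$ independent of $n$; given such a $C$ one may take $\lambda=\tfrac{1}{8C}$, because $n^3/(n+1)^3\ge\tfrac18$ for $n\ge 1$.

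Writing $g(e,B_{n+1+M})=g(e,B_{n+M})+g(e,S_{n+1+M})$, it suffices to bound the single shell sum $g(e,S_{n+1+M})$ by a constant multiple of $g(e,S_{n+M})$. The plan is to push the Green mass of the outer shell one step inward. For neighbouring vertices $v\sim w$, uniform irreducibility supplies $\kappa<K$ with $p^{(\kappa)}(w,v)\ge\epsilon_0$, and a time shift in the series defining the Green function yields $g(e,v)\ge\sum_{m\ge 0}p^{(m)}(e,w)\,p^{(\kappa)}(w,v)=p^{(\kappa)}(w,v)\,g(e,w)\ge\epsilon_0\,g(e,w)$, that is, $g(e,w)\le\epsilon_0^{-1}g(e,v)$. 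I would apply this to every $w\in S_{n+1+M}$, choosing for each a graph-geodesic neighbour $v(w)\in S_{n+M}$, so that $g(e,S_{n+1+M})\le\epsilon_0^{-1}\sum_{w\in S_{n+1+M}}g(e,v(w))$.

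The one genuine difficulty is that $w\mapsto v(w)$ may be many-to-one, so the last sum overcounts each $v\in S_{n+M}$ a number of times bounded by its degree. The observation that rescues the argument is that uniform irreducibility already forces a uniform bound on the vertex degrees: fixing $v$ and partitioning its neighbours according to the witnessing exponent $\kappa<K$, each class has at most $\epsilon_0^{-1}$ elements because the numbers $p^{(\kappa)}(v,w)$ sum over $w$ to at most $1$, and there are at most $K$ admissible exponents, whence $\deg(v)\le K/\epsilon_0=:D$. With bounded degree in hand, $\sum_{w\in S_{n+1+M}}g(e,v(w))\le D\sum_{v\in S_{n+M}}g(e,v)=D\,g(e,S_{n+M})$, so $g(e,S_{n+1+M})\le \epsilon_0^{-1}D\,g(e,S_{n+M})\le\epsilon_0^{-1}D\,g(e,B_{n+M})$, and we may take $C=1+D/\epsilon_0$. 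I expect this degree bound, together with making the inner-neighbour inequality precise through the time shift, to be the crux of the proof; transience enters only through finiteness of the Green function, and bounded range only to guarantee $M<\infty$ so that $B_{n+M}$ is a genuine finite ball.
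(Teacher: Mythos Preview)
Your proposal is correct and follows essentially the same approach as the paper: both reduce the nontrivial inequality $\lambda f(n)\le f(n+1)$ to the shell estimate $g(e,S_{n+M+1})\le C' g(e,S_{n+M})$, and both obtain this by combining the neighbour inequality $\epsilon_0 g(e,w)\le g(e,v)$ for $v\sim w$ (via a time shift and Chapman--Kolmogorov) with the uniform degree bound implied by uniform irreducibility. The only differences are cosmetic---the paper records the degree bound as $(K+1)/\epsilon_0$ rather than $K/\epsilon_0$ and organises the final ratio computation slightly differently---but the argument is the same.
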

\begin{proof}
The graph is locally finite, thus $S_n$ and $B_n$ are finite sets for every $n\in \mathbb{N}$. The random walk $Z_n$ is transient, therefore $g(e,B_{n+M})$ is finite for every $n\in \mathbb{N}$, hence $f$ is well defined. Since the balls are an increasing sequence of sets, $f$ is decreasing and summable. In order to show that $f$ satisfies the requirements of a Floyd function, it has to be proven that there is a $\lambda>0$ such that $\lambda f(n) \leq f(n+1)$ for every $n\in \mathbb{N}$.

The random walk is uniformly irreducible therefore there is an $\epsilon_0>0$ and a $K<\infty$  such that $v\sim w
\Rightarrow p^{(\kappa)}(v,w) \geq \epsilon_0 \text{ for some } \kappa\leq K$.
This also implies that $\deg (v) \leq \frac{K+1}{\epsilon_0}$. Let $v,w \in G$ with $v\sim w$, then: 
$$\epsilon_0 g(e,w)=\sum_{k=1}^{\infty} \epsilon_0 p^{(k)}(e,w)\leq \sum_{k=1}^{\infty} p^{(\kappa)}(w,v) p^{(k)}(e,w)\leq 
\sum_{k=1}^{\infty} p^{(k)}(e,v)=g(e,v)
$$
Combining the bounded degree of each vertex with the inequality for the Green function of neighbouring vertices we gain:
$$g(e,S_{n+1})=\sum_{w\in S_{n+1}} g(e,w)\leq
\sum_{v_w} \frac{1}{\epsilon_0}g(e,v_w)\leq
\frac{K+1}{\epsilon_0}\frac{1}{\epsilon_0}g(e,S_n)$$
We note that $v_w$ is an arbitrary neighbour of $w$ such that $v_w\in S_n$, moreover the same vertex can appear multiple times since many $w\in S_{n+1}$ could be spanned from the same vertex $v_w$, but at most $\frac{K+1}{\epsilon_0}$, since we have bounded degree. 
We conclude:
\begin{align*}
\left( \frac{n}{n+1}\right)^3 \frac{g(e,B_{n+M})}{g(e,B_{n+M+1})}&=
\left( \frac{n}{n+1}\right)^3 \frac{\sum\limits_{k=0}^{n+M} g(e,S_k)}{\sum\limits_{k=0}^{n+M+1} g(e,S_{k})}\\
&=
\left( \frac{n}{n+1}\right)^3 \frac{\sum\limits_{k=0}^{n+M} g(e,S_k)}{\sum\limits_{k=0}^{n+M} g(e,S_{k})+g(e,S_{n+M+1})}\\
&\geq 
\left( \frac{n}{n+1}\right)^3 \frac{\sum\limits_{k=0}^{n+M} g(e,S_k)}{\sum\limits_{k=0}^{n+M} g(e,S_{k})+\frac{K+1}{\epsilon_{0}^{2}}g(e,S_{n+M})}\\
&\geq \frac{\epsilon_0^2}{8(K+1+\epsilon_0^2)} > 0 \qedhere
\end{align*}
\end{proof}

\begin{theorem}\label{th1}
Let $P$ be the transition matrix of a transient, uniformly irreducible random walk $Z_n$ on $G$ with bounded range. Then there exist a Floyd function $f$ such that, for any starting point, $Z_n$ converges to a point of $\partial_f G$ almost surely.
\end{theorem}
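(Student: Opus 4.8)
The plan is to take for $f$ the Floyd function produced by Lemma \ref{lemma} and to show that, from any starting vertex $x$, the trajectory $(Z_n)$ is almost surely a Cauchy sequence for the Floyd metric $d_f$. Since $\bar G$ is compact and complete, this already yields a limit $Z_\infty\in\bar G$, and it then remains only to check that $Z_\infty$ lies in the boundary. First I would record that the walk escapes to infinity: transience gives $g(x,v)<\infty$ for every $v$, so each vertex, and hence by local finiteness each ball $B_r$, is visited only finitely often almost surely; therefore $|Z_n|\to\infty$ almost surely, from any starting point.

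The heart of the argument is a one-step estimate combined with a first-moment computation. Because the walk has bounded range, $d(Z_n,Z_{n+1})\le M$, so a $d$-geodesic joining them has at most $M$ edges, each of which stays within distance $M$ of $Z_n$; as $f$ is nonincreasing (Lemma \ref{lemma}) every such edge has Floyd length at most $f(|Z_n|-M)$, whence
\begin{equation*}
d_f(Z_n,Z_{n+1})\le M\,f\bigl(|Z_n|-M\bigr).
\end{equation*}
I would then estimate the expected total Floyd length of the trajectory. Writing $F(e,x)=\mathbb{P}_e[\text{hit }x]>0$ and using $g(x,v)\le F(e,x)^{-1}g(e,v)$ (which follows from $F(e,v)\ge F(e,x)F(x,v)$ and $g(\cdot,v)=F(\cdot,v)g(v,v)$), one obtains
\begin{align*}
\mathbb{E}_x\Big[\sum_{n}f\bigl(|Z_n|-M\bigr)\Big]
&=\sum_{k}f(k-M)\,g(x,S_k)
\le\frac{1}{F(e,x)}\sum_{k}f(k-M)\,g(e,S_k)\\
&\le\frac{1}{F(e,x)}\sum_{k}\frac{g(e,B_k)}{(k-M)^3\,g(e,B_k)}
=\frac{1}{F(e,x)}\sum_{k}\frac{1}{(k-M)^3}<\infty,
\end{align*}
where the crucial cancellation uses the exact form $f(k-M)=\bigl((k-M)^3g(e,B_k)\bigr)^{-1}$ together with $g(e,S_k)\le g(e,B_k)$. (The finitely many indices $k\le 2M$ contribute a finite amount, bounded via $g(x,B_{2M})<\infty$.)

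Consequently $\sum_n d_f(Z_n,Z_{n+1})<\infty$ almost surely, so the tails $d_f(Z_m,Z_n)\le\sum_{k=m}^{n-1}d_f(Z_k,Z_{k+1})$ tend to $0$ and $(Z_n)$ is $d_f$-Cauchy; by completeness it converges to some $Z_\infty\in\bar G$. To see $Z_\infty\in\partial_f G$, I would observe that any path from a fixed vertex $v$ to a vertex $w$ with $|w|>|v|$ must cross every sphere $S_k$ with $|v|\le k<|w|$ and hence has Floyd length at least $f(|v|)$, so $d_f(v,w)\ge f(|v|)>0$; since $|Z_n|\to\infty$, the limit cannot be any vertex, giving $Z_\infty\in\partial_f G$.

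I expect the main obstacle to be the first-moment step: the proof hinges on recognizing that the denominator $g(e,B_{n+M})$ and the polynomial factor in the definition of $f$ are tailored precisely so that $g(e,S_k)\,f(k-M)$ collapses to the summable bound $(k-M)^{-3}$, with the shift by $M$ absorbing the bounded range, and on the elementary passage from $g(x,\cdot)$ to $g(e,\cdot)$ that makes the estimate uniform enough over starting points. The inequality \eqref{anis} could instead be used to deduce Cauchyness from the growth of the Gromov products $(Z_m\wedge Z_n)$, but the telescoping estimate above avoids having to control geodesics between distant trajectory points and seems more direct in the absence of a group action.
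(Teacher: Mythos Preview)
Your proof is correct and follows the same overall strategy as the paper: use the Floyd function $f(n)=\bigl(n^{3}g(e,B_{n+M})\bigr)^{-1}$ from Lemma~\ref{lemma}, telescope $d_f(Z_m,Z_n)\le\sum_k d_f(Z_k,Z_{k+1})$, and show $\sum_k \mathbb{E}\,d_f(Z_k,Z_{k+1})<\infty$ by exploiting the cancellation $g(e,S_k)/g(e,B_k)\le 1$, together with the passage $g(x,\cdot)\le C_x\,g(e,\cdot)$.

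The one genuine difference is in the one–step estimate. The paper bounds $d_f(Z_k,Z_{k+1})$ via Karlsson's inequality~\eqref{anis} through the Gromov product, obtaining a bound of the form $4X_kf(X_k)+2Mf(X_k)$ with $X_k=\max\{|Z_k|-M,0\}$, and then has to control \emph{both} summands. You instead observe directly that a $d$-geodesic of length $\le M$ stays in $B(Z_k,M)$, so every edge has Floyd length $\le f(|Z_k|-M)$ and hence $d_f(Z_k,Z_{k+1})\le M\,f(|Z_k|-M)$. This is more elementary and drops the $X_kf(X_k)$ term entirely (so in fact your argument would already work with $n^{2}$ in place of $n^{3}$ in the definition of $f$). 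You also supply two points the paper leaves implicit: the a.s.\ escape $|Z_n|\to\infty$ from transience and local finiteness, and the lower bound $d_f(v,w)\ge f(|v|)$ for $|w|>|v|$ forcing the limit into $\partial_fG$. Both approaches rest on the same key idea; yours trades the Gromov-product machinery for a short direct computation.
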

\begin{proof}
The random walk $Z_n$ has bounded range, so there is an $M>0$ such that $d(Z_k,Z_{k+1})<M$ for every $k\in\mathbb{N}$.
We define a new random variable based on the random walk:
$$
X_k:=\max\lbrace \vert Z_k \vert - M, 0\rbrace
$$
The Gromov product is bounded below:
\begin{align*}
Z_k\wedge Z_{k+1}&=\frac{1}{2}(\vert Z_k\vert +\vert Z_{k+1}\vert
-d(Z_k,Z_{k+1}))\\
&\geq \frac{1}{2}(\vert Z_k\vert + \vert Z_{k+1}\vert - M)\\
&\geq \frac{1}{2}(\vert Z_k\vert + \vert Z_{k}\vert - M - M)\\
&= X_k
\end{align*}

We define $f(n)=\dfrac{1}{n^3 g(e,B_{n+M})}$ as the Floyd function, $f$ is well defined by Lemma \ref{lemma}. Observe that $nf(n)$ is a decreasing function, hence so is $\nu(n)$.

Let $m<n$ for $m,n\in \mathbb{N}$.
\begin{align}
d_f(Z_m,Z_n)&\leq \sum\limits_{k=m}^n d_f(Z_k,Z_{k+1})
\leq \sum\limits_{k=m}^n \nu(\lfloor Z_k\wedge Z_{k+1} \rfloor ) \leq \sum\limits_{k=m}^n \nu (X_k)\nonumber \\
&\leq
\sum\limits_{k=m}^n 4 (X_k) f(X_k)
+2\sum\limits_{k=m}^n \sum\limits_{i=X_k}^{X_k + M} f(i)\nonumber \\
&\leq
\sum\limits_{k=m}^n 4X_k f(X_k)
+2\sum\limits_{k=m}^n M f(X_k) \label{ine 3}
\end{align}
The second sum in this inequality becomes small as $m$ grows, since $f$ satisfies the requirements of a Floyd function.

In order to prove convergence to the boundary, we will prove that the sequence is Cauchy almost surely.
We compute
\begin{align*}
\sum\limits_{k=1}^{\infty} \mathbb{E}\left[ X_kf(X_k)\right]
&=
\sum\limits_{k=1}^{\infty} \sum\limits_{n=1}^{\infty} nf(n)\mathbb{P}(X_k =n)\\
&=\sum\limits_{n=1}^{\infty} nf(n)\sum\limits_{k=1}^{\infty} \mathbb{P}(X_k =n)\\
&=\sum\limits_{n=1}^{\infty} n \dfrac{1}{n^3 g(e,B_{n+M})} \sum\limits_{k=1}^{\infty} \mathbb{P}(X_k =n)\\
&=\sum\limits_{n=1}^{\infty} n \dfrac{1}{n^3 g(e,B_{n+M})}
g(Z_0,S_{n+M})\\
&\leq \sum\limits_{n=1}^{\infty} \dfrac{1}{n^2} \dfrac{1}{g(e,B_{n+M})} \epsilon_0^{-\vert Z_0\vert} g(e,S_{n+M})\\
&< \epsilon_0^{-\vert Z_0\vert} \sum\limits_{n=1}^{\infty}\frac{1}{n^2}<\infty
\end{align*}

This leads to $\sum_{k=1}^{\infty} \mathbb{E}\left[ X_kf(X_k)\right]<\infty$, so $\sum_{k=1}^{\infty}  X_kf(X_k) <\infty$ almost surely. Similarly, $\sum_{k=1}^{\infty}f(X_k)<\infty$ almost surely. We conclude that 
$$\sum_{k=1}^{\infty} 4X_k f(X_k)+ 2\sum_{k=1}^{\infty} M f(X_k)<\infty\quad \text{almost surely}
$$
From the inequality (\ref{ine 3}), this implies that $(Z_n)$ is a $d_f$-Cauchy sequence.
\end{proof}

In the Lemma \ref{lemma} we require for the random walk to have bounded range, in order to fix $M$ for the proof of the Theorem \ref{th1}. This assumption could be removed, and $M$ could be taken as an arbitrary natural number.

\section{Dirichlet problem}\label{s3}
In Theorem \ref{th1}, we have achieved convergence to the boundary for a specific Floyd function; essentially this function depends on the rate at which the Gromov product of consecutive points of the random walk tends to infinity. 

In the case of the graph that satisfies the strong isoperimetric inequality, Karlsson \cite{K3} mentions that maybe, following the methods of \cite{KWo}, one could achieve convergence to the boundary and solvability for the Dirichlet problem if the Floyd function $f=o(n^{-3})$ and the Floyd boundary is infinite.

The conditions imposed in \cite{KWo} imply assymptotically linear speed for the Gromov product. This will allow us to prove convergence to the boundary and solvability of the Dirichlet problem for every Floyd function $f$ that satisfies $\sum_{n=1}^{\infty}nf(n)<\infty$ (independently the boundary's cardinality). Therefore for the rest of the statements we require that $f$ satisfies this condition.

For both cases of space of ends and hyperbolic boundary, \cite{KWo} proves lemmas about the asymptotic behavior of the random walk.
Those lemmas are proved for the graph and they do not depend on a specific compactification, therefore they can be used in the case of the Floyd compactification. A slightly more general version can be found in \cite{Wo2} and we shall use those in order to gain a more general result.

A graph $G$ satisfies the strong isoperimetric inequality if there is a $\eta>0$ such that $\vert \partial W \vert \geq \eta \vert W \vert$, for every finite subset of its vertices $W\subseteq G$, where $\partial W = \lbrace w\in W: \text{there exist a } v\sim w \text{ with } v\in G\setminus W\rbrace$.
Let $P$ be the transition matrix of a random walk $Z_n$ on $G$. The spectral radius of $P$ is defined as $\rho (P)= \limsup_{n\to \infty} p^{n} (v,w)^{\frac{1}{n}}$. We say that $(G,P)$ has uniform first moment if $\overline{m}=\sum_{n=1}^{\infty} \phi (n) <\infty$, where $\phi(n)=\sup_{v\in G} \sigma_{v}([n,\infty))$ are the tails of the step length distribution $\sigma_v(n)=\sum\limits_{w:d(v,w)=n} p(v,w)$.

We call $(G,P)$ reversible if there is a measure $m: G \to (0,\infty)$ such that $m(v)p(v,w)=m(w)p(w,v)$ for every $v,w\in G$  and strongly reversible if there is a constant $M>0$ such that $M^{-1}\leq m(v)\leq M$ for every $v\in G$.
For a strongly reversible, uniformly irreducible random walk on a graph, Kaimanovich \cite{Kai} proved that the strong isoperimetric inequality implies spectral radius strictly smaller than 1 and the existance of a constant $C>0$ such that $p^{n}(v,w)\leq C \rho^{n}$ for every $v,w\in G$ and $n\in \mathbb{N}$.
\begin{lemma}\label{lkenou}
Let $\tau(n):= 10 \sum_{i=\lfloor \frac{n}{2}\rfloor+1}^{\infty} f(i)$, then $\tau$ is a decreasing function, $\nu(n)\leq \tau(n)$ for every $n\in \mathbb{N}$ and $\sum_{i=1}^{\infty} \tau (i) < \infty$.
\end{lemma}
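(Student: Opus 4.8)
The plan is to verify the three assertions in turn, each reducing to an elementary comparison of tail sums of the Floyd function $f$. Throughout I would use only that $f$ is positive and non-increasing, together with the standing hypothesis $\sum_{n\geq 1} n f(n) < \infty$ imposed in this section.

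For the monotonicity of $\tau$, I would observe that $n\mapsto \lfloor n/2\rfloor + 1$ is non-decreasing, so the lower index of summation in the definition of $\tau(n)$ only moves to the right as $n$ grows; since every term $f(i)$ is positive, the resulting tail sum can only shrink. Hence $\tau$ is non-increasing, i.e.\ decreasing in the sense used for $f$ in this paper. This step is immediate and requires no computation.

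For the inequality $\nu(n)\leq \tau(n)$, recall $\nu(n) = 4nf(n) + 2\sum_{i=n}^\infty f(i)$. The key observation is that the block $\sum_{i=\lfloor n/2\rfloor+1}^{n} f(i)$ has $\lceil n/2\rceil \geq n/2$ terms, each at least $f(n)$ because $f$ is decreasing; hence $n f(n) \leq 2\sum_{i=\lfloor n/2\rfloor+1}^{n} f(i) \leq 2\sum_{i=\lfloor n/2\rfloor+1}^{\infty} f(i)$. Multiplying by $4$ controls the first term of $\nu(n)$ by $8$ times the tail appearing in $\tau$. For the second term, since $\lfloor n/2\rfloor + 1 \leq n$ for every $n\geq 1$, the tail $\sum_{i=n}^\infty f(i)$ is dominated by $\sum_{i=\lfloor n/2\rfloor+1}^\infty f(i)$, contributing a further factor $2$. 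Adding the two bounds produces exactly the coefficient $10$ in the definition of $\tau$, which is presumably the reason that constant was chosen.

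For summability, I would interchange the order of summation (all terms being non-negative, Tonelli applies):
\[
\sum_{n=1}^\infty \tau(n) = 10\sum_{n=1}^\infty \sum_{i=\lfloor n/2\rfloor + 1}^\infty f(i) = 10 \sum_{i=1}^\infty f(i)\,\bigl|\{\,n\geq 1 : \lfloor n/2\rfloor + 1\leq i\,\}\bigr|.
\]
The counting is routine: $\lfloor n/2\rfloor + 1 \leq i$ is equivalent to $n \leq 2i-1$, so the cardinality equals $2i-1$. Hence $\sum_{n\geq 1} \tau(n) \leq 20\sum_{i\geq 1} i f(i) < \infty$ by the section's hypothesis. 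The only place demanding care is extracting the weight $2i-1$ correctly from the floor function, which is the one mild obstacle; everything else is a direct consequence of the monotonicity of $f$.
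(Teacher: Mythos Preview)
Your proof is correct and follows essentially the same route as the paper: the same block comparison $nf(n)\le 2\sum_{i=\lfloor n/2\rfloor+1}^{n}f(i)$ for the bound $\nu(n)\le\tau(n)$, and the same tail-interchange for summability, where the paper records the conclusion as $\sum_n\tau(n)\le 20\sum_n nf(n)$ without spelling out the count $|\{n:\lfloor n/2\rfloor+1\le i\}|=2i-1$ that you made explicit. No differences of substance.
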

\begin{proof}
Obviously $\tau$ is decreasing, since $f$ is also decreasing we gain, 
$$nf(n)\leq 2\left(f\left(\left\lfloor\frac{n}{2}\right\rfloor+1\right)+f\left(\left\lfloor\frac{n}{2}\right\rfloor+2\right)\cdots f\left(2\left\lfloor \frac{n}{2}\right\rfloor+1\right)\right)< 2 \sum_{i=\lfloor \frac{n}{2}\rfloor+1}^{\infty} f(i).$$
This yields
$$\nu(n)=4nf(n)+2\sum_{i=n}^{\infty}f(i)<8\sum_{i=\lfloor \frac{n}{2}\rfloor+1}^{\infty}f(i)+2\sum_{i=\lfloor \frac{n}{2}\rfloor+1}^{\infty}f(i)= \tau (n).$$ 
To complete the proof we make the following calculations $$\sum_{n=1}^{\infty} \tau(n)=
\sum_{n=1}^{\infty}10\sum_{i=\lfloor \frac{n}{2}\rfloor+1}^{\infty} f(i) \leq 10 \sum_{n=1}^{\infty} 2 n f(n) < \infty. 
$$
\end{proof}
\begin{lemma}\label{2}
Let $P$ be the transition matrix of a uniformly irreducible random walk $Z_n$ on $G$ with uniform first moment and $\rho(P)<1$. Then $Z_n$ converges to a random point $\xi$ of $\partial_f G$ almost surely.
\end{lemma}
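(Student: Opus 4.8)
The plan is to show that $(Z_n)$ is almost surely a $d_f$-Cauchy sequence and then to identify its limit as a boundary point. Telescoping along the trajectory gives, for $m<n$,
$$d_f(Z_m,Z_n)\leq \sum_{k=m}^{n-1} d_f(Z_k,Z_{k+1}) \leq \sum_{k=m}^{n-1}\tau\big(\lfloor Z_k\wedge Z_{k+1}\rfloor\big),$$
where each summand is estimated by applying \eqref{anis} to the pair $Z_k,Z_{k+1}$, then $\nu\leq\tau$ from Lemma \ref{lkenou}, and finally the monotonicity of $\tau$ together with $|m_k|\geq Z_k\wedge Z_{k+1}$ for the nearest-to-$e$ point $m_k$ of the geodesic $[Z_k,Z_{k+1}]$. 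Passing to $\tau$ is exactly what replaces the monotonicity of $\nu$ used in Theorem \ref{th1}, which is unavailable for a general Floyd function. Since $\sum_j\tau(j)<\infty$, the whole argument reduces to forcing the consecutive Gromov products to grow linearly: once $Z_k\wedge Z_{k+1}\geq ck$ holds for all large $k$ and some $c>0$, monotonicity gives $\tau(\lfloor Z_k\wedge Z_{k+1}\rfloor)\leq\tau(\lfloor ck\rfloor)$, and $\sum_k\tau(\lfloor ck\rfloor)\leq\lceil 1/c\rceil\sum_j\tau(j)<\infty$, so the tail $\sum_{k\geq m}$ tends to $0$ as $m\to\infty$ uniformly in $n$, which is precisely $d_f$-Cauchyness.

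To obtain the linear lower bound on $Z_k\wedge Z_{k+1}$ I would combine two inputs. First, uniform irreducibility yields bounded degree, and the asymptotic-behaviour lemmas of \cite{KWo}, \cite{Wo2} (which are statements about the graph metric alone, hence independent of the compactification) give, under uniform first moment and $\rho(P)<1$, a positive rate of escape: there is $\alpha>0$ with $|Z_n|\geq \alpha n$ for all large $n$, almost surely. Second, uniform first moment controls the individual steps, since $\mathbb{P}(d(Z_k,Z_{k+1})\geq \epsilon k)\leq \phi(\lceil\epsilon k\rceil)$, and as $\sum_j\phi(j)=\overline m<\infty$ we get $\sum_k\phi(\lceil\epsilon k\rceil)<\infty$; by Borel--Cantelli $d(Z_k,Z_{k+1})<\epsilon k$ for all large $k$ almost surely. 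Taking $\epsilon=\alpha/4$ and using $|Z_{k+1}|\geq|Z_k|-d(Z_k,Z_{k+1})$ gives
$$Z_k\wedge Z_{k+1}=\tfrac12\big(|Z_k|+|Z_{k+1}|-d(Z_k,Z_{k+1})\big)\geq |Z_k|-d(Z_k,Z_{k+1})\geq \tfrac{3\alpha}{4}k$$
for all large $k$, so one may take $c=3\alpha/4$.

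Both Borel--Cantelli conclusions hold off a single null set and only from a random index onward; the finitely many early terms contribute a finite amount and do not affect the vanishing of the tail, so $(Z_n)$ is $d_f$-Cauchy almost surely. As $\bar G$ is compact and complete, $Z_n$ converges to some $\xi\in\bar G$, and since $|Z_n|\geq\alpha n\to\infty$ the sequence eventually leaves every ball $B_r$, whence $\xi\in\partial_f G$. I expect the main obstacle to be the positive rate of escape: extracting $|Z_n|\geq\alpha n$ from $\rho(P)<1$ together with uniform first moment alone (rather than from the stronger uniform estimate $p^{(n)}(v,w)\leq C\rho^n$) is the one genuinely nontrivial ingredient, and the task is to check that the relevant lemmas of \cite{KWo}, \cite{Wo2} transfer verbatim to our non-group-invariant graph setting; everything else is the telescoping bookkeeping enabled by Lemma \ref{lkenou}.
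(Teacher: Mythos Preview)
Your proposal is correct and follows essentially the same route as the paper: telescope $d_f(Z_m,Z_n)$ via $\tau(\lfloor Z_k\wedge Z_{k+1}\rfloor)$, establish linear growth of the consecutive Gromov products from the rate of escape plus sublinear step size, and conclude Cauchyness from $\sum_j\tau(j)<\infty$. The paper obtains both ingredients by citing \cite[Lemma~8.8 and Corollary~8.9]{Wo2} (giving $\tfrac{1}{n}d(Z_n,Z_{n+1})\to 0$ and $\liminf_n\tfrac{1}{n}|Z_n|\geq\underline m>0$ almost surely under exactly the hypotheses stated, without the extra bound $p^{(n)}\leq C\rho^n$), which resolves the concern you flag at the end; your direct Borel--Cantelli argument for the step size is just an explicit version of the first of these.
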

\begin{proof}
Since $Z_n$ satisfies those conditions,  we gain the following asymptotic\linebreak
properties  
$\lim\limits_{n\to\infty}\frac{1}{n}d(Z_n,Z_{n+1})=0$ and 
$0<\underline{m}\leq \liminf\limits_{n\to\infty}\frac{1}{n}\vert Z_n\vert$ almost surely (from \cite[Lemma 8.8 and Corollary 8.9]{Wo2}).
Combining those two, we gain that
$\liminf\limits_{n\to \infty} \tfrac{1}{n} (Z_n\wedge Z_{n+1}) \geq \underline{m}$ almost surely. 

So there are $c_0>0$ and $n_0\in \mathbb{N}$ such that $Z_n\wedge Z_{n+1}> c_0 n$ for every $n\geq n_0$ almost surely. We set $c_1=min\lbrace c_0, 1\rbrace$. From inequality (\ref{anis}) and Lemma \ref{lkenou} we conclude that:
\begin{align*}
\sum\limits_{k=1}^{\infty} d_f(Z_k,Z_{k+1})&\leq
\sum\limits_{k=1}^{\infty}\tau ( \lfloor Z_k\wedge Z_{k+1}\rfloor )\\
&\leq \sum\limits_{k=1}^{n_0}\tau ( \lfloor Z_k\wedge Z_{k+1}\rfloor ) + \sum\limits_{k=n_0}^{\infty}\tau ( \lfloor c_0 n \rfloor )\\
&\leq \sum\limits_{k=1}^{n_0}\tau ( \lfloor Z_k\wedge Z_{k+1}\rfloor ) + \sum\limits_{k=1}^{\infty}c_1^{-1}\tau (n) < \infty
\end{align*}
This yields that $Z_n\to \xi$ almost surely for a random $\xi \in \partial_f G$.
\end{proof}

The following Lemma has the same core as \cite[Corollary 2]{KWo}, which was proved and used in order to show the solvability of the Dirichlet problem in both cases of ends of graphs and hyperbolic boundary. Once more we provide a slightly more general Lemma that will follow \cite[Lemma 21.17]{Wo2}. We will make some adjustments in order to work for the case of Floyd compactification. 

\begin{lemma}\label{3}
Let $P$ be the transition matrix of a uniformly irreducible random walk $Z_n$ on $G$ with uniform first moment, $\rho(P)<1$ and $p^{(n)}(v,w)\leq C\rho^{n}$ for a constant $C>0$ and every $v,w\in G$, $n\in\mathbb{N}$. For $\alpha,\epsilon >0$ and $v\in G$ we define $\mathcal{A}_v=\mathcal{A}_v(\alpha,\epsilon)$ as the event that:
\begin{enumerate}
\item $Z_0=v$
\item $d(Z_0,Z_n)\leq (\overline{m}+\epsilon)n$, $n\geq \alpha
\vert v\vert$ 
\item $d(Z_n,Z_{n+1})\leq \epsilon n$, $n>\alpha\vert v\vert$
\item $\vert Z_n\vert \geq (\underline{m}-\epsilon)n$, $n\geq \alpha \vert v \vert$
\item $\vert Z_n\vert > \epsilon \vert v\vert$, $n\geq 0$
\end{enumerate}
Then there is $\epsilon_0>0$ such that for all $\epsilon\leq \epsilon_0$ and $\alpha>0$
$\lim\limits_{\vert v\vert \to \infty}\mathbb{P}_v(\mathcal{A}_v)=1$.
\end{lemma}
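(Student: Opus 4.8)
The plan is to write $\mathcal{A}_v=\bigcap_{i=1}^{5}\mathcal{A}_v^{(i)}$, where $\mathcal{A}_v^{(i)}$ is the event that the $i$-th condition holds, and to show that each complement has $\mathbb{P}_v$-probability tending to $0$ as $|v|\to\infty$; a union bound then gives $\mathbb{P}_v(\mathcal{A}_v)\to 1$. Under $\mathbb{P}_v$ the event $\mathcal{A}_v^{(1)}$ holds surely, so only (2)--(5) need work. The guiding principle, following \cite[Lemma 21.17]{Wo2}, is that every estimate is \emph{uniform in the starting point} $v$: conditions (2) and (3) are governed by the uniform first moment (through the tails $\phi$), while (4) and (5) are governed by the exponential decay $p^{(n)}(v,w)\leq C\rho^{n}$. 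Since each atypical event is required only for $n\geq\alpha|v|$ (resp.\ $n>\alpha|v|$), and these thresholds grow with $|v|$, the matching tail estimates vanish.

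For (2) and (3) I would use only the uniform first moment. As $\sigma_{v}([n,\infty))\leq\phi(n)$ for every $v$, the increments $d(Z_k,Z_{k+1})$ can be stochastically dominated along the trajectory by an i.i.d.\ sequence $(Y_k)$ with $\mathbb{E}[Y_k]=\overline{m}$ that does not depend on $v$. For (3), $\mathbb{P}_v(d(Z_n,Z_{n+1})>\epsilon n)\leq\phi(\lceil\epsilon n\rceil)$, so
\begin{align*}
\mathbb{P}_v\big(\exists\, n>\alpha|v|:\ d(Z_n,Z_{n+1})>\epsilon n\big)\leq\sum_{n>\alpha|v|}\phi(\lceil\epsilon n\rceil),
\end{align*}
which vanishes as $|v|\to\infty$ because $\sum_n\phi(\lceil\epsilon n\rceil)<\infty$. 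For (2) I would bound $d(Z_0,Z_n)\leq\sum_{k=0}^{n-1}d(Z_k,Z_{k+1})\preceq\sum_{k<n}Y_k$ and invoke the strong law of large numbers in maximal form, so that $\mathbb{P}(\sup_{n\geq\alpha|v|}\tfrac1n\sum_{k<n}Y_k>\overline{m}+\epsilon)\to 0$ as $\alpha|v|\to\infty$. In both cases uniformity in $v$ is automatic, the dominating sequence being fixed.

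For (4) I would use the lower drift produced by $\rho(P)<1$. The bound $p^{(n)}(v,w)\leq C\rho^{n}$ together with bounded degree (hence $|B_r|\leq C D^{r}$ for some $D$) gives, uniformly in $v$,
\begin{align*}
\mathbb{P}_v\big(|Z_n|\leq(\underline{m}-\epsilon)n\big)=\sum_{w\in B_{(\underline{m}-\epsilon)n}}p^{(n)}(v,w)\leq C\big(D^{\,\underline{m}-\epsilon}\rho\big)^{n},
\end{align*}
whose base is $<1$ once $\epsilon$ is small; should this crude volume bound be too lossy to reach the full rate $\underline{m}-\epsilon$, one instead quotes the drift statement $\liminf_n|Z_n|/n\geq\underline{m}$ of \cite[Corollary 8.9]{Wo2}, whose uniform-in-$v$ tail is again supplied by the exponential decay. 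Summing over $n\geq\alpha|v|$ yields a geometric tail tending to $0$ for every $\alpha>0$.

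The main obstacle is (5), that a walk started far from $e$ never enters $B_{\epsilon|v|}$. Here I would pass to hitting probabilities: writing $T$ for the first entrance time into $B_{\epsilon|v|}$,
\begin{align*}
\mathbb{P}_v(T<\infty)\leq\sum_{w\in B_{\epsilon|v|}}\mathbb{P}_v(Z_n=w\ \text{for some }n)\leq\sum_{w\in B_{\epsilon|v|}}g(v,w).
\end{align*}
The crucial analytic input is the exponential decay of the Green function in the distance, $g(v,w)\leq C'\theta^{\,d(v,w)}$ with $\theta<1$, which follows from $\rho(P)<1$ and the uniform first moment. Since $d(v,w)\geq(1-\epsilon)|v|$ for $w\in B_{\epsilon|v|}$ and $|B_{\epsilon|v|}|\leq C D^{\epsilon|v|}$, the sum is at most $CC'\big(D^{\epsilon}\theta^{\,1-\epsilon}\big)^{|v|}$. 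Choosing $\epsilon_0$ so small that $D^{\epsilon}\theta^{\,1-\epsilon}<1$ for all $\epsilon\leq\epsilon_0$ makes this vanish as $|v|\to\infty$; this is exactly where the volume growth competes with the Green decay and what forces $\epsilon_0$ to be small. Taking $\epsilon_0$ to be the smaller of this threshold and the one from (4), the union bound over (2)--(5) gives $\mathbb{P}_v(\mathcal{A}_v)\to 1$ for every $\alpha>0$ and every $\epsilon\leq\epsilon_0$.
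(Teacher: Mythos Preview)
Your proposal is correct and follows the same strategy as the paper: split $\mathcal{A}_v$ into the five sub-events and show that each of (2)--(5) has $\mathbb{P}_v$-probability tending to $1$ as $|v|\to\infty$, using uniformity in the starting point. The paper's proof is terser, delegating items (3) and (5) directly to \cite[Lemma~21.17]{Wo2} and items (2) and (4) to the almost-sure drift statements of \cite[Lemma~8.8, Corollary~8.9]{Wo2}; your sketch essentially unpacks those citations (tail bound via $\phi$ for (3), i.i.d.\ domination plus LLN for (2), exponential decay of transition probabilities and of the Green function for (4) and (5)), and in particular makes the uniformity in $v$ explicit where the paper leaves it implicit.
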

\begin{proof}
Let $\mathcal{A}_{v,i}$ be the event that the $i$-th property is satisfied, for $i=2,3,4,5$. Then from \cite[Lemma 21.17]{Wo2}, it is known that $\lim\limits_{\vert v\vert \to \infty} \mathbb{P}_{v}(\mathcal{A}_{v,i})=1$ for $i=3,5$.
As mentioned before uniform first moment yields that $$\limsup\limits_{n\to\infty}\frac{1}{n}\sup\limits_{k\leq n} d(Z_0,Z_k)\leq \overline{m}\text{ and } \liminf\limits_{n\to \infty}\frac{1}{n}\vert Z_n\vert \geq \underline{m} \quad \text{almost surely}$$
(see \cite[Lemma 8.8 and Corollary 8.9]{Wo2}) Therefore $\limsup\limits_{n\to\infty} \frac{1}{n}d(Z_0,Z_n)\leq \overline{m}$ almost surely  so $\lim\limits_{n\to\infty}\mathbb{P}(\mathcal{A}_{v,i})=1$ for $i=2,4$.
\end{proof}

The main idea in \cite{KWo} is to bound the probability that the random walk converges to a neighbourhood of $\xi$ by the probability that an event $\mathcal{A}_v$ occurs. We follow this plan.

\begin{theorem}
Let $P$ be the uniformly irreducible transition matrix of a random walk $Z_n$ on $G$ with uniform first moment, $\rho(P)<1$ and $p^{(n)}(v,w)\leq C\rho^{n}$ for a constant $C>0$ and every $v,w\in G$, $n\in\mathbb{N}$. Then the Dirichlet problem with respect to the Floyd compactifiation is solvable.
\end{theorem}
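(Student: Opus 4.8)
The plan is to solve the Dirichlet problem by the harmonic–measure method. By Lemma~\ref{2} the walk converges almost surely to a random limit $Z_\infty\in\partial_f G$, so for $v\in G$ I let $\nu_v$ denote the law of $Z_\infty$ under $\mathbb{P}_v$ (the harmonic measure from $v$). Given a continuous $\phi\colon\partial_f G\to\mathbb{R}$, I would set $h(v)=\mathbb{E}_v[\phi(Z_\infty)]=\int\phi\,d\nu_v$; conditioning on the first step and using the Markov property shows $h=Ph$, so $h$ is bounded and harmonic. Everything then reduces to the boundary values, i.e.\ to $h(v)\to\phi(\xi)$ whenever $v\to\xi\in\partial_f G$. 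Since $\bar G$ is compact, $\phi$ is uniformly continuous, so by splitting on the event $\{d_f(Z_\infty,\xi)\ge\eta\}$ and using Markov's inequality (together with $d_f(Z_\infty,\xi)\le d_f(Z_\infty,v)+d_f(v,\xi)$ and $d_f(v,\xi)\to0$) it suffices to prove the single estimate
\begin{equation*}
\mathbb{E}_v\!\left[d_f(v,Z_\infty)\right]\longrightarrow 0\qquad\text{as }|v|\to\infty .
\end{equation*}
This yields $\nu_v\to\delta_\xi$ weakly as $v\to\xi$, which is exactly solvability.

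To prove the key estimate I would use the event $\mathcal{A}_v=\mathcal{A}_v(\alpha,\epsilon)$ of Lemma~\ref{3}. Because $d_f$ has finite diameter $D$ and $\mathbb{P}_v(\mathcal{A}_v)\to1$, we have $\mathbb{E}_v[d_f(v,Z_\infty)]\le\sup_{\mathcal{A}_v}d_f(v,Z_\infty)+D\,\mathbb{P}_v(\mathcal{A}_v^{c})$, so it is enough to bound $d_f(v,Z_\infty)$ on $\mathcal{A}_v$ by a quantity depending only on $|v|$ and tending to $0$. Writing $N=\lceil\alpha|v|\rceil$, I would split
\begin{equation*}
d_f(v,Z_\infty)\le d_f(v,Z_N)+\sum_{k\ge N}d_f(Z_k,Z_{k+1})
\end{equation*}
and estimate the two pieces separately, using that on $\mathcal{A}_v$ properties (1)–(5) hold.

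For the tail, properties (3) and (4) give $Z_k\wedge Z_{k+1}\ge|Z_k|-d(Z_k,Z_{k+1})\ge(\underline{m}-2\epsilon)k$ for $k\ge N$; choosing $\epsilon$ small enough that $c:=\underline{m}-2\epsilon>0$ and invoking Lemma~\ref{lkenou} (so that $d_f(Z_k,Z_{k+1})\le\nu(\lfloor Z_k\wedge Z_{k+1}\rfloor)\le\tau(\lfloor ck\rfloor)$ with $\tau$ decreasing and summable) bounds the tail by $\sum_{k\ge N}\tau(\lfloor ck\rfloor)$, a tail of a convergent series, hence $\to0$ as $|v|\to\infty$. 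For the initial piece I would apply inequality~(\ref{anis}) \emph{directly} to the pair $v=Z_0$ and $Z_N$: it gives $d_f(v,Z_N)\le\nu\big(\lfloor v\wedge Z_N\rfloor\big)$, and properties (2),(4) together with $|Z_0|=|v|$ yield
\begin{equation*}
v\wedge Z_N=\tfrac12\big(|v|+|Z_N|-d(v,Z_N)\big)\ge\tfrac12\big(|v|+((\underline{m}-\overline{m})-2\epsilon)N\big).
\end{equation*}
Taking $\alpha$ small enough (relative to $\overline{m}-\underline{m}$) makes the right side at least $c'|v|$ for some $c'>0$ and all large $|v|$, so $d_f(v,Z_N)\le\nu(\lfloor c'|v|\rfloor)\to0$. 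Both pieces then vanish on $\mathcal{A}_v$, giving the key estimate and the theorem.

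The main obstacle is the initial piece. Handling it step by step fails: for small times there is no control on the individual jumps $d(Z_k,Z_{k+1})$ (we have only a first moment, not bounded range), so the per-step Gromov products need not be large. The remedy is to not sum the early steps at all, but to bound $d_f(v,Z_N)$ in one stroke through (\ref{anis}); the delicate point is the quantitative choice of $\alpha$ and $\epsilon$ ensuring that the Gromov product $v\wedge Z_N$ remains a fixed proportion of $|v|$ despite the negative contribution $(\underline{m}-\overline{m})N$. Property~(5) furnishes a complementary a priori lower bound $|Z_n|>\epsilon|v|$ that can reinforce these estimates if needed.
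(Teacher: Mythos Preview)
Your proposal is correct and follows essentially the same route as the paper: reduce solvability to weak convergence of the hitting measures, invoke the event $\mathcal{A}_v$ from Lemma~\ref{3}, split $d_f(v,Z_\infty)$ into an initial piece $d_f(v,Z_N)$ (handled in one stroke via inequality~(\ref{anis}) and a lower bound on $v\wedge Z_N$) and a tail $\sum_{k\ge N}d_f(Z_k,Z_{k+1})$ (handled via per-step Gromov products and Lemma~\ref{lkenou}). The only cosmetic differences are that the paper fixes $\alpha=\tfrac{1}{2\overline m}$ and uses property~(5) rather than~(4) to bound $|Z_N|$ in the initial piece, whereas you leave $\alpha$ free and use~(4); both choices give a Gromov product of order $|v|$.
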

\begin{proof}
Let $\mu_v$ be the Borel measure in $\partial_f G$ for the $\mathbb{P}_v$-distribution of $\lim\limits_{n\to\infty}Z_n$, meaning $\mu_v(B)=\mathbb{P}[\lim\limits_{n\to\infty}Z_n\in B\vert Z_0=v]$.
The Dirichlet problem is solvable if and only if the random walk converges to the boundary and the measures $\mu_v$ converges weakly to the Dirac measure $\delta_\xi$ as $v\to\xi$, for every $\xi \in \partial_f G$ (see Theorem (20.3) \cite{Wo2}). 
From Lemma \ref{2} we already know that the random walk converges to a point on the boundary almost surely. 

Let $\xi\in\partial_f G$ and $B(\xi,r)$ be the open ball in $\bar{G}$, let $v\in B(\xi,\frac{r}{2})\cap G$ and we take $\mathcal{A}_v=\mathcal{A}_v(\epsilon,\alpha)$ as in the previous Lemma, where $\alpha=\frac{1}{2\overline{m}}$, $\epsilon< \frac{1}{3}\underline{m}$. If $\vert v \vert > \frac{\epsilon + \overline{m}}{\epsilon}$ and the event $\mathcal{A}_v$ occurs then:\\
Set $k=\lceil\alpha\vert v\vert\rceil=\lceil\frac{1}{2\overline{m}}\vert v\vert\rceil$, then
\begin{align*}
2 (Z_0\wedge Z_k )&=\vert Z_0\vert+\vert Z_k \vert - d(Z_0,Z_k) =\vert v \vert + \vert Z_k\vert - d(v,Z_k)\\
&\geq \vert v\vert + \epsilon \vert v \vert  - (k+1)(\overline{m}+\epsilon)
\geq \vert v\vert (1+\epsilon) - (\frac{1}{2\overline{m}}\vert v\vert -1)(\overline{m}+\epsilon)\\
&\geq \vert v\vert (1+\epsilon-\frac{1}{2}-\frac{\epsilon}{2\overline{m}}-\frac{\overline{m}+\epsilon}{\vert v \vert})\geq \vert v \vert (1 - \frac{1}{2}-\frac{1}{6}+
\frac{\epsilon\vert v \vert - \epsilon - \overline{m}}{\vert v \vert})\\
&\geq \frac{\vert v \vert}{3}
\end{align*}
If $n\geq \alpha \vert v\vert$, then
\begin{align*}
2 (Z_n\wedge Z_{n+1}) &=\vert Z_n \vert + \vert Z_{n+1}\vert - d(Z_n,Z_{n+1})\\
&\geq (\underline{m}-\epsilon)n+(\underline{m}-\epsilon)(n+1)-\epsilon n\\
&\geq (2\underline{m}-3\epsilon)n+\underline{m}-\epsilon\\
&\geq (2\underline{m}-3\epsilon)n
\end{align*}
Consequently for $n\geq \alpha\vert v\vert$
\begin{align*}
d_f(Z_0,Z_n)&\leq d_f(Z_0,Z_k)+d_f(Z_k,Z_n)\\
&\leq \tau ( \lfloor v\wedge Z_k \rfloor )+\sum\limits_{i=k}^n \tau ( \lfloor Z_i\wedge Z_{i+1} \rfloor )\\
&\leq \tau \left(\left\lfloor\frac{\vert v\vert}{6}\right\rfloor\right)+\sum\limits_{i=k}^n \tau \left(\lfloor (\underline{m}-\frac{3\epsilon}{2})i\rfloor\right)
\end{align*}
Now we set $c=\min\lbrace \underline{m}-\frac{3\epsilon}{2}, 1\rbrace$ and the inequality becomes: 
\begin{align*}
d_f(Z_0,Z_n)&\leq \tau \left(\left\lfloor\frac{\vert v\vert}{6}\right\rfloor\right)+\sum_{i=k}^n\tau\left( \lfloor c i\rfloor \right)\\
&
\leq \tau \left( \left\lfloor \frac{\vert v\vert}{6} \right\rfloor \right)
+\sum\limits_{i=\lfloor c k\rfloor}^{\infty} c^{-1} \tau(i)
\end{align*}

Since we have from Lemma \ref{lkenou} for $\tau$ that $\sum_{i=1}^{\infty}\tau(i)<\infty$, there is a $\vert v\vert>\frac{\overline{m}+\epsilon}{\epsilon}$ large enough such that $d_f(Z_0,Z_n)<\frac{r}{2}$ for every $n>\alpha\vert v \vert$. This means that $Z_n\in B(\xi,r)$ for all large $n$. This can be expressed as
$\mu_{v}(B(\xi,r)\cap\partial_f G)\geq \mathbb{P}_{v}(\mathcal{A}_v)$, but from Lemma \ref{3} $\lim\limits_{\vert v\vert \to\infty}\mathbb{P}_{v}(\mathcal{A}_v)=1$. This is true for every $r>0$, therefore $\lim\limits_{v\to\xi}\mu_{v}=\delta_{\xi}$.
\end{proof}

\begin{corollary}
Let $G$ be a graph that satisfies the strong isoperimetric inequality and $P$ be the uniformly irreducible transition matrix of a strongly reversible random walk $Z_n$ that has uniform first moment. Then the random walk $Z_n$ converges to the boundary $\partial_f G$ and the Dirichlet problem with respect to the Floyd compacftifiation is solvable.
\end{corollary}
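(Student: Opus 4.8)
The plan is to reduce the Corollary to the preceding Theorem by invoking Kaimanovich's result quoted above. The hypotheses of the Corollary---strong reversibility, uniform irreducibility, and the strong isoperimetric inequality---are precisely those under which Kaimanovich \cite{Kai} guarantees that $\rho(P)<1$ and that there is a constant $C>0$ with $p^{(n)}(v,w)\leq C\rho^n$ for all $v,w\in G$ and $n\in\mathbb{N}$. So first I would apply \cite{Kai} to extract these two conclusions, thereby upgrading the geometric hypothesis into the analytic hypotheses that the rest of the section is built upon.

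Once $\rho(P)<1$ and the exponential bound are in hand, the two assertions of the Corollary follow from results already established. For convergence to the boundary, the random walk is uniformly irreducible, has uniform first moment, and now satisfies $\rho(P)<1$; these are exactly the hypotheses of Lemma \ref{2}, so $Z_n\to\xi\in\partial_f G$ almost surely. For solvability of the Dirichlet problem, the walk now meets all four hypotheses of the preceding Theorem---uniform irreducibility, uniform first moment, $\rho(P)<1$, and the pointwise bound $p^{(n)}(v,w)\leq C\rho^n$---so that Theorem applies verbatim and yields solvability with respect to the Floyd compactification.

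There is no substantial obstacle here: the Corollary is essentially a repackaging of the preceding Theorem under a more geometric set of hypotheses, and the only real content is verifying that Kaimanovich's theorem is applicable. The one point I would check with care is that the decay estimate produced by \cite{Kai} is uniform in $v,w$, with a single constant $C$ and a single base $\rho$, since the preceding Theorem requires exactly such a uniform bound; it is strong reversibility---the two-sided estimate $M^{-1}\leq m(v)\leq M$ on the reversing measure---together with the strong isoperimetric inequality that delivers this uniformity. I would therefore state explicitly that all three standing assumptions feed into \cite{Kai} simultaneously, after which both conclusions are immediate.
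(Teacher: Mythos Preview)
Your proposal is correct and matches the paper's intent: the Corollary is stated without proof in the paper, as it follows immediately from the preceding Theorem once Kaimanovich's result \cite{Kai} (quoted just before Lemma~\ref{lkenou}) supplies $\rho(P)<1$ and the uniform bound $p^{(n)}(v,w)\leq C\rho^{n}$ from the hypotheses of strong reversibility, uniform irreducibility, and the strong isoperimetric inequality. Your separate appeal to Lemma~\ref{2} for convergence is harmless but redundant, since the Theorem already subsumes it.
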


Therefore there exist an analogue of probabilistic propositions for the Floyd boundaries as there is for hyperbolic boundaries and the space of ends. As already mentioned in the introduction, for a random walk on a locally finite hyperbolic graph there is a relation between the hyperbolic boundary and the Martin boundary. Also, for a random walk on an arbitrary graph, the Martin boundary covers the space of ends. The recent work of Gekhtman, Gerasimov, Potyagailo and Yang sparks the question of whether the Martin boundary of a random walk covers the Floyd boundary (for suitable Floyd functions) on a general graph under geometric adaptedness conditions in the place of group invariance.
\bibliography{vivlio}

\begin{thebibliography}{10}

\bibitem{Ancn}
Alano Ancona.
\newblock Positive harmonic functions and hyperbolicity.
\newblock In Josef Kr{\'a}l, Jaroslav Luke{\v{s}}, Ivan Netuka, and
  Ji{\v{r}}{\'i} Vesel{\'y}, editors, {\em Potential Theory Surveys and
  Problems}, pages 1--23, Berlin, Heidelberg, 1988. Springer Berlin Heidelberg.

\bibitem{Doo}
J.~L. Doob.
\newblock Discrete potential theory and boundaries.
\newblock {\em Journal of Mathematics and Mechanics}, 8(3):433--458, 1959.

\bibitem{Fld}
William~J. Floyd.
\newblock Group completions and limit sets of kleinian groups.
\newblock {\em Inventiones mathematicae}, 57(3):205--218, Oct 1980.

\bibitem{Frdn}
Hans Freudenthal.
\newblock {\"U}ber die enden diskreter r{\"a}ume und gruppen.
\newblock {\em Commentarii Mathematici Helvetici}, 17(1):1--38, Dec 1944.

\bibitem{GGPY}
Ilya Gekhtman, Victor Gerasimov, Leonid Potyagailo, and Wenyuan Yang.
\newblock Martin boundary covers floyd boundary.
\newblock {\em Inventiones Math.}, to appear.

\bibitem{Grmv}
M.~Gromov.
\newblock {\em Hyperbolic Groups}, pages 75--263.
\newblock Springer New York, New York, NY, 1987.

\bibitem{Kai}
Vadim~A. Kaimanovich.
\newblock Dirichlet norms, capacities and generalized isoperimetric
  inequalities for markov operators.
\newblock {\em Potential Analysis}, 1(1):61--82, Mar 1992.

\bibitem{KWo}
Vadim~A. Kaimanovich and Wolfgang Woess.
\newblock The dirichlet problem at infinity for random walks on graphs with a
  strong isoperimetric inequality.
\newblock {\em Probability Theory and Related Fields}, 91(3):445--466, Sep
  1992.

\bibitem{K1}
Anders Karlsson.
\newblock Boundaries and random walks on finitely generated infinite groups.
\newblock {\em Arkiv f{\"o}r Matematik}, 41(2):295--306, Oct 2003.

\bibitem{K2}
Anders Karlsson.
\newblock Free subgroups of groups with nontrivial floyd boundary.
\newblock {\em Communications in Algebra - COMMUN ALGEBRA}, 31, Jan 2003.

\bibitem{K3}
Anders Karlsson.
\newblock {Some remarks concerning harmonic functions on homogeneous graphs}.
\newblock volume DMTCS Proceedings vol. AC, Discrete Random Walks (DRW'03),
  pages 137--144, 2003.

\bibitem{Mrtn}
Robert~S. Martin.
\newblock Minimal positive harmonic functions.
\newblock {\em Transactions of the American Mathematical Society},
  49(1):137--172, 1941.

\bibitem{PWo}
Massimo~A. Picardello and Wolfgang Woess.
\newblock Harmonic functions and ends of graphs.
\newblock {\em Proceedings of the Edinburgh Mathematical Society},
  31(3):457–461, 1988.

\bibitem{Wo1}
Wolfgang Woess.
\newblock Fixed sets and free subgroups of groups acting on metric spaces.
\newblock {\em Mathematische Zeitschrift}, 214(1):425--439, Sep 1993.

\bibitem{Wo2}
Wolfgang Woess.
\newblock {\em Random Walks on Infinite Graphs and Groups}.
\newblock Cambridge Tracts in Mathematics. Cambridge University Press, 2000.

\end{thebibliography}
\bibliographystyle{plain}
\end{document}